\documentclass[11pt,letterpaper,reqno]{amsart}
\usepackage{tikz}
\usetikzlibrary{positioning, shapes.geometric, arrows.meta}
\usepackage{amssymb}
\usepackage{amsmath}
\usepackage{amsthm}
\usepackage{amsfonts}
\usepackage{bbm}
\usepackage{enumitem} 
\usepackage{pgfplots}
\pgfplotsset{compat=1.18} 
\usepackage{booktabs}

\usepackage{graphicx}
\usepackage[T1]{fontenc}
\usepackage{doi}
\usepackage{float} 
\usepackage{tikz}
\usepackage{pgfplots}
\pgfplotsset{compat=1.18}
\usetikzlibrary{arrows.meta}

\usepackage{bookmark}
\usepackage{hyperref}
\hypersetup{pdfstartview={FitH}}

\newtheorem{thm}{Theorem}[section]
\newtheorem{conj}[thm]{Conjecture}
\newtheorem{lem}[thm]{Lemma}
\newtheorem{prop}[thm]{Proposition}
\newtheorem{cor}[thm]{Corollary}
\theoremstyle{definition}
\newtheorem{rem}[thm]{Remark}
\newtheorem{defin}[thm]{Definition}
\numberwithin{equation}{section}

\newcommand{\D}{\mathbb D}
\newcommand{\T}{\mathbb T}
\newcommand{\C}{\mathbb C}
\newcommand{\dd}{\,dm}
\newcommand{\ip}[2]{\left\langle #1,#2\right\rangle}
\newcommand{\norm}[1]{\left\|#1\right\|}

\begin{document}

\title[Agler--McCarthy entropy conjecture]
{Proof of the Agler--McCarthy entropy conjecture}

\author[J.~Lei]{Jialin Lei}
\address{Department of Mathematics, Binghamton University, Binghamton, NY 13902, USA}

\email{jlei15@binghamton.edu}

\author[T.~Zhang]{Teng Zhang}

\address{School of Mathematics and Statistics, Xi'an Jiaotong University, Xi'an 710049, P. R. China}
\email{teng.zhang@stu.xjtu.edu.cn}

\subjclass[2020]{Primary 30C10; Secondary 30A10, 30C15,  30H10}

\keywords{Krzy\.z conjecture; entropy conjecture; self-inversive polynomials;  polar derivative; Hardy space; Schur functions}

\begin{abstract}
In 2021, J.~Agler and J.~E. McCarthy proposed a two-step programme toward the celebrated
Krzy\.z conjecture.  The first step is to prove an entropy conjecture for
polynomials whose zeros all lie on the unit circle; the second is to establish a
full degree condition for extremal functions in the Krzy\.z conjecture.  The purpose
of this paper is to complete the first step.   More precisely,  we establish
the sharp homogeneous entropy inequality for all non-constant polynomials
with zeros on the unit circle and determine the equality cases.
\end{abstract}

\maketitle
\tableofcontents

\section{Introduction}
\subsection{Krzy\.z conjecture}
Let $\mathbb{D}=\{z\in\mathbb C: |z|<1\}$ denote the open unit disk. We use $H(\D)$ to denote the set of all holomorphic functions on \(\D\). Let
\[
\Omega=\{f\in H(\D): f(\D)\subset \D\setminus\{0\}\}.
\]
For a holomorphic function \(f\), write
\[
        f(z)=\sum_{k\ge0}\widehat f(k)z^k,
\]
where \(\widehat f(k)\) denotes its \(k\)-th Taylor coefficient at the origin.

In 1968, Krzy\.z~\cite{Krz68} proposed the following conjecture, now known as the \emph{Krzy\.z conjecture}.

\begin{conj}[Krzy\.z]\label{conj:Krzyz}
Let
$
   f\in \Omega$.
Then for every $n\geq 1$,
\[
K_n^\bullet
=
\sup\{|\widehat f(n)|: f\in\Omega\}= \frac{2}{e}.
\]
Moreover, equality holds for a given $n$ if and only if $f$ is of the form
\begin{equation*}
        f(z)
    =
    \alpha
    \exp\!\left(
        \frac{\zeta z^n-1}{\zeta z^n+1}
    \right),
    \qquad |\alpha|=|\zeta|=1 .
\end{equation*}
\end{conj}
Let us briefly recall some related developments of
Conjecture~\ref{conj:Krzyz}.

We first mention the known results for low order coefficients.  The case
$n=1$ is elementary. Krzy\.z
\cite{Krz68} proved Conjecture~\ref{conj:Krzyz} for $n=2$.  The case
$n=3$ was proved by Hummel, Scheinberg and Zalcman \cite{HSZ77}.
The case $n=4$ was proved first by Tan \cite{Tan83}, and later by a
different method by Brown \cite{Bro87}.  The case $n=5$ was proved by
Samaris \cite{Sam03}.  The cases $n\ge 6$ remain open.

We next recall non-sharp upper bounds which hold uniformly for all
coefficients.  Horowitz \cite{Hor78} proved that, for every $n\ge1$,
\[
 K_n^\bullet
    \le
    1-\frac{1}{3\pi}
      +\frac{4}{\pi}\sin\frac{1}{12}
    =
    0.99987\ldots .
\]
This was later improved slightly by Ermers~\cite{Erm90} to
$
  K_n^\bullet\le 0.9991\ldots .
$
Both estimates are still far from the conjectured sharp bound
$
   2/e=0.73575888\ldots .
$

We also mention several more recent structural developments. 
Mart\'in, Sawyer, Uriarte-Tuero and Vukoti\'c~\cite{MSUV15} proved that
sixteen different conditions are equivalent to the Krzy\.z conjecture.
Their paper also contains a useful historical summary and an illustration of the importance of the Krzy\.z conjecture.  Brevig, Grepstad and Instanes~\cite{BGI23} clarified the role of Wiener's
trick in related Hardy space extremal problems.

\subsection{The Agler--McCarthy programme}\label{subsec:AM}

Let
\[
\T=\{z\in\C: |z|=1\},\qquad dm(e^{i\theta})=\frac{d\theta}{2\pi}.
\]
Agler and McCarthy~\cite{AM21} discovered a striking connection between the
Krzy\.z conjecture and the following entropy conjecture for polynomials whose zeros
lie on the unit circle.  

\begin{conj}[Agler--McCarthy]\label{conj:AM}
Let \(p\) be a non-constant polynomial with all zeros on \(\T\), normalized by
$
\int_{\T}|p|^2\dd=1.
$
Then
\begin{equation}\label{eq:AM-conj-intro}
\int_{\T}|p|^2\log |p|^2\dd\ge 1-\log 2.
\end{equation}
Moreover, equality in \eqref{eq:AM-conj-intro} holds if and only if
\begin{equation}\label{eq:equality-intro}
p(z)=\frac{\zeta}{\sqrt2}\bigl(\omega+z^n\bigr),
\end{equation}
where \(n=\deg p\) and \(|\zeta|=|\omega|=1\).
\end{conj}

We now recall precisely how Conjecture~\ref{conj:AM} enters Conjecture~\ref{conj:Krzyz} in \cite{AM21}.

Since every \(f\in\Omega\) is zero free in the simply connected domain \(\D\), it
admits a holomorphic logarithm.  Thus we may write
\[
f=e^{-g}.
\]
Since \(|f|<1\) in \(\D\), this function \(g\) satisfies
$
\operatorname{Re}g>0.
$
Thus \(g\) belongs to the Herglotz class
\[
\mathcal H=\{g\in H(\D): \operatorname{Re}g\ge0\}.
\]
The ambiguity in the choice of the logarithm is only an additive purely imaginary
constant; this is harmless in what follows.

For \(m\ge1\), let \(\mathcal R_m\) denote the class of rational Herglotz
functions with exactly \(m\) boundary atoms:
\[
\mathcal R_m
=
\left\{
ia+\sum_{\ell=1}^m w_\ell
\frac{\tau_\ell+z}{\tau_\ell-z}:
a\in\mathbb R,\quad
w_\ell>0,\quad
\tau_\ell\in\T\ \text{distinct}
\right\}.
\]
Equivalently, elements of \(\mathcal R_m\) are the finite atomic Herglotz
functions whose representing measure has exactly \(m\) positive atoms on
\(\T\).  Put
\[
\mathcal R_n^\bullet
=
\bigcup_{1\le m\le n}\mathcal R_m.
\]

Agler and McCarthy \cite[Lemma~3.3]{AM21} use Pick's theorem to show that
if \(f\in\Omega\) is \(K_n^\bullet\)-extremal and \(f=e^{-g}\), then 
\begin{equation*}
g=-\log f\in\mathcal R_n^\bullet.
\end{equation*}
That is,
\[
g\in\mathcal R_m
\qquad\text{for some }1\le m\le n.
\]

The Agler--McCarthy programme for proving the Krzy\.z conjecture \cite[Theorem~1.8]{AM21} has two
separate steps:
\[
\boxed{
\text{Step I: prove the entropy conjecture, Conjecture~\ref{conj:AM};}
}
\]
\[
\boxed{
\text{Step II: establish the full degree condition }-\log f\in\mathcal R_n
\text{ for Krzy\.z extremals.}
}
\]
\subsection{Main result}
The purpose of this paper is to complete Step I: prove Conjecture~\ref{conj:AM}.  We shall in fact prove the following homogeneous form.

\begin{thm}\label{thm:main}
Let $p$ be a non-constant polynomial of degree $n\ge 1$, and suppose that all zeros of $p$ lie on $\T$.  Put
$
N(p)=\int_{\T}|p|^2\dd.
$
Then
\begin{equation}\label{eq:main-homogeneous}
\int_{\T}|p|^2\log |p|^2\dd
\ge
N(p)\left(1+\log\frac{N(p)}2\right).
\end{equation}
Equality holds if and only if
\begin{equation*}
p(z)=c\bigl(\omega+z^n\bigr),
\end{equation*}
where   $c\neq 0$ and $|\omega|=1$.  In particular, if $N(p)=1$, then $|c|=1/\sqrt2$, and \eqref{eq:AM-conj-intro} follows with equality precisely in the form \eqref{eq:equality-intro}.
\end{thm}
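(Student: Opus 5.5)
Let $F(t)=t\log t$. The inequality is homogeneous of degree one under $p\mapsto cp$: replacing $p$ by $cp$ multiplies both sides by $|c|^2$. So it suffices to treat the normalization $N(p)=2$, where the claim reads $\int_\T |p|^2\log|p|^2\dd\ge 2$. The extremal $\omega+z^n$ gives $|p|^2=2+2\cos(n\theta+\phi)$, and indeed $\int (2+2\cos\theta)\log(2+2\cos\theta)\,d\theta/2\pi=2$.

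\textbf{Step 1: reduction to the structure of $|p|^2$.} Since all zeros lie on $\T$, $p$ is self-inversive up to a unimodular constant. Hence $|p(e^{i\theta})|^2=q(\theta)$, where $q$ is a nonnegative trigonometric polynomial of degree $n$ that vanishes (to even order) at the zeros. Equivalently, $p(e^{i\theta})=e^{in\theta/2}\,c\,R(\theta)$ with $R$ a real trigonometric "polynomial" in half-angles, $R(\theta)=\prod_j 2\sin((\theta-\theta_j)/2)$. The key quantity is $\int q\log q$.

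\textbf{Step 2: the rotation-averaging (polar derivative) trick.} The keywords suggest polar derivatives. For $|\lambda|=1$ consider $p_\lambda = p + \lambda z^n\overline{p(1/\bar z)}$-type combinations. More useful: the family of all polynomials with zeros on $\T$ contains deformations moving one zero. I would prove the inequality by showing the functional $\Phi(p)=\int |p|^2\log|p|^2 - N(1+\log(N/2))$ cannot have a strict minimum off the extremal family.

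Concretely, parametrize $p=\prod (z-e^{i\theta_j})$ and minimize $\Phi$ over the torus of zero positions, which is compact, so a minimizer exists. At a critical point, differentiate in $\theta_j$. Writing $|p|^2=|z-e^{i\theta_j}|^2 |p_j|^2$ with $p_j=p/(z-e^{i\theta_j})$ gives a Lagrange-type condition. The condition is
\[\int |p_j|^2\,\partial_{\theta_j}|z-e^{i\theta_j}|^2\,(\log|p|^2-\log (N/2))\dd=0.\]
Using that $\partial_{\theta_j}|e^{i\theta}-e^{i\theta_j}|^2=-2\sin(\theta-\theta_j)$, these $n$ conditions say that $\log(2|p|^2/N)$, weighted by $|p|^2$, is orthogonal to every $\operatorname{Im}(\bar z\,e^{i\theta_j})|p_j|^2$. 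These span (for distinct zeros) the space of real trigonometric polynomials $\operatorname{Im}(z^{-?}\cdot \text{polar derivatives})$ of degree $\le n$ vanishing suitably. I would show they span all real trig polynomials of degree $\le n$ of the form $\operatorname{Re}(\bar z^{?}p\,\overline{h})$ with $\deg h\le n-1$. This amounts to the statement that $|p|^2\log(2|p|^2/N)$ has vanishing Fourier coefficients against $\bar p h$, i.e. $P_+\bigl(p\log(2|p|^2/N)\bigr)$ is a multiple of $z^n$ plus constant. Coincident zeros are handled by continuity or by a separate second-order argument.

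\textbf{Step 3: identify critical points.} The critical point equation becomes $p\log(2|p|^2/N)\in \operatorname{span}\{\text{constants}, z^n\}+\overline{H^2_0}$. For $|p|^2=N/2\,(2+2\cos(n\theta+\phi))$, $\log$ of it equals $\log|1+\omega\bar z^n|^2=2\operatorname{Re}\log(1+\bar\omega z^n)$, which verifies the condition. I expect the main obstacle to be the converse: showing every critical point satisfying this nonlinear Hardy-space condition has $\Phi\ge0$, and equality only for $\omega+z^n$. I would first try a convexity argument. Along the geodesic in zero-position space given by the flow $\theta_j\mapsto \theta_j+t$ for half the zeros, $\Phi$ should be convex in $t$, via $F''=1/t$ and Cauchy–Schwarz. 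Alternatively, I would apply an induction on $n$ using the polar derivative $D_\alpha p$, which again has all zeros on $\T$ for self-inversive $p$ (a Schur–Cohn type fact). The aim is the entropy monotonicity $\Phi(p)\ge \Phi(D_\alpha p)/\text{scale}$, averaged over $\alpha\in\T$ and reducing to degree $1$, where $p=c(\omega+z)$ and the inequality is equality. The equality characterization then follows by tracking equality in the averaging step, which forces $D_\alpha p$ to be extremal for all $\alpha$ and hence $p=c(\omega+z^n)$.
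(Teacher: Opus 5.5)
\textbf{There is a genuine gap: the proposal never proves the inequality.} Steps 1--2 only reduce the problem to showing that every critical point of $\Phi$ on the torus of zero positions has $\Phi\ge0$, and Step 3 explicitly leaves exactly this open. That is not a routine check, because the critical set is large. $\Phi$ is invariant under rotating and reflecting the zero configuration. Hence every $(z^m-1)^k$ with $mk=n$ is a critical point: rotation symmetry makes all partial derivatives equal, and reflection symmetry then forces them to vanish. This includes the non-extremal ones with $k\ge2$, which are exactly the coincident-zero configurations your spanning argument excludes. Nothing rules out non-symmetric critical points either, so you would need to classify the solutions of a transcendental system in $\theta_1,\dots,\theta_n$.

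The critical-point equation is also mis-derived. Differentiating gives $\partial_{\theta_j}\Phi=\int\partial_{\theta_j}|p|^2\,\bigl(\log(2|p|^2/N)-1\bigr)\dd$. You cannot drop the constant, since $\int\partial_{\theta_j}|p|^2\dd=\partial_{\theta_j}N\ne0$ in general. Moreover, $\partial_{\theta_j}|p|^2=2\operatorname{Re}(\bar p\,\partial_{\theta_j}p)$ with $\deg\partial_{\theta_j}p\le n-1$, so these $n$ real conditions only involve Fourier coefficients of $p\log(2|p|^2/N)$ of degree below $n$. The condition $p\log(2|p|^2/N)\in\operatorname{span}\{1,z^n\}+\overline{H^2_0}$ that you ``verify'' for the extremal is in fact false there. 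For $p=c(\omega+z^n)$, the coefficient of $z^{2n}$ in $c(\omega+z^n)\bigl(\log(1+\bar\omega z^n)+\log(1+\omega\bar z^n)\bigr)$ is $c\bar\omega/2\ne0$.

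\textbf{Neither fallback supplies the missing inequality.} Convexity of $\Phi$ along the flow moving half the zeros would control only one curve through each configuration, not the global minimum on an $n$-dimensional torus. It also does not follow from $F''(t)=1/t$. Because $|p_t|^2$ depends nonlinearly on $t$, the second derivative contains $\int F'(|p_t|^2)\,\partial_t^2|p_t|^2\dd$, which has no sign, plus the contribution of $-N(1+\log(N/2))$, which is concave in $N$.

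In the induction, the fact that $D_\alpha p$ has its zeros on $\T$ for $|\alpha|=1$ is fine. But the ``entropy monotonicity'' relating $\Phi(p)$ to $\Phi(D_\alpha p)$ is neither formulated (the scale is unspecified) nor proved, and it would carry the whole proof. The equality argument also fails at $n=2$, since every degree-one polynomial with its zero on $\T$ is extremal, so ``$D_\alpha p$ extremal for all $\alpha$'' carries no information there.

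\textbf{The missing idea.} The object you were circling is the polar derivative at the \emph{origin}. Set $q=p-\frac1n zp'=\frac1nD_0p$ (not your $q=|p|^2$); for self-inversive $p$ one has $D_\alpha p=n(q+\alpha z^{-1}q^*)$. The paper's key point is that $q$ is zero-free in $\D$, so $p=q(1+r)$ with $r=q^*/q$ inner and $r(0)=0$. Then $\int|p|^2\log|p|^2\dd=\int|p|^2\log|q|^2\dd+\int|q|^2h(r)\dd$, where $h(w)=|1+w|^2\log|1+w|^2$.
\begin{itemize}
\item The first term is at least $N\log(N/2)$, by Jensen's inequality for the probability measure $\frac12|1+r|^2\dd$.
\item The second term is at least $N$. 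The Fourier series of $h$ reduces it to the moments $M_k=\ip{r^kq}{q}$. These vanish for $k\ge n$ and satisfy $|M_k|\le M_1$ by a weighted Schur contraction, and $4\sum_{k\ge2}\frac{1}{k(k^2-1)}<1$ closes the estimate.
\end{itemize}
The resulting explicit remainder $\frac{2}{n(n-1)}\sum_{j=1}^{n-1}\frac{j(n-j)}{n^2}|a_j|^2$ forces $a_1=\dots=a_{n-1}=0$ in the equality case. No analysis of critical points is needed.
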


Our proof is based on a decomposition which appears naturally from the polar derivative.  If $D=z\frac{d}{dz}$ and $p$ is normalized to be self-inversive, then
$
q=p-\frac1nDp
$
satisfies
$
p=q+q^*,
$
where the star operation is always taken relative to the fixed degree $n$.  The crucial feature is that this particular $q$ is automatically zero free in $\D$.  Thus, for polynomials with simple zeros on $\T$, the quotient
$
r=q^*/q
$
is a finite Blaschke product with $r(0)=0$.

The entropy in  \eqref{eq:main-homogeneous} then splits as
\[
\int_{\T}|p|^2\log |p|^2\dd
=
\int_{\T}|p|^2\log |q|^2\dd
+
\int_{\T}|p|^2\log\left|\frac pq\right|^2\dd.
\]
The first term is bounded below by Jensen's inequality.  The second term is governed by a sharp polar derivative inequality:
\begin{equation}\label{eq:sharp-intro}
\int_{\T}|p|^2\log\left|\frac{p}{p-\frac1nDp}\right|^2\dd
\ge
\int_{\T}|p|^2\dd.
\end{equation}
The proof of \eqref{eq:sharp-intro} is the heart of the argument.  It uses the Fourier expansion of
\[
h(e^{it})=|1+e^{it}|^2\log |1+e^{it}|^2
\]
and a weighted contraction principle for multiplication by Schur functions.  We also give the details needed to justify the Fourier substitution and the passage from simple zeros to multiple zeros.
Here, we use the convention that $x\log x=0$ at $x=0$.  All logarithmic integrals in this paper are interpreted in this standard limiting sense.  When two nonzero polynomials have common boundary zeros, expressions containing the logarithm of their quotient are interpreted as differences of the corresponding logarithmic integrals; see Definition \ref{def:ratio-functional}.

\subsection*{Organization of the paper}

The paper is organized as follows.  Section~\ref{s:2} develops the self-inversive
normalization and introduces the polar factor
\(q=p-\frac1nDp\), leading to the decomposition \(p=q+q^*\).  We also prove
that \(q\) is zero free in \(\D\), and that, in the simple zero case,
\(q^*/q\) is a finite Blaschke product vanishing at the origin.
Section~\ref{s:3} proves the lower bound for the Jensen term.  Section~\ref{s:4} proves the
sharp polar derivative estimate, using the Fourier expansion of
\(|1+w|^2\log |1+w|^2\) and a weighted Schur contraction principle.  In
Section~\ref{s:5} we remove the assumption that the zeros are simple, by means of a
self-inversive approximation and a logarithmic continuity argument.  Section~\ref{s:6}
combines the preceding estimates to prove Theorem~\ref{thm:main} and to
determine the equality cases.

\section{The self-inversive normalization and the polar factor}\label{s:2}

For a complex polynomial $f$ of degree at most $n$, define its reflected polynomial, relative to the fixed degree $n$, by
\begin{equation*}
f^*(z)=z^n\overline{f(1/\bar z)}.
\end{equation*}
Thus, if $f(z)=\sum_{j=0}^n b_jz^j$, then
\[
f^*(z)=\sum_{j=0}^n\overline{b_{n-j}}z^j.
\]
A polynomial is called self-inversive, relative to degree $n$, if $f=f^*$. Recall that a finite Blaschke product is a rational function of the form
\[
        B(z)=\gamma\prod_{j=1}^m
        \frac{z-\alpha_j}{1-\overline{\alpha_j}z},
        \qquad |\gamma|=1,\quad \alpha_j\in\D .
\]

\begin{lem}\label{lem:self-inversive-normalization}
Let $p$ be a polynomial of degree $n$ whose zeros lie on $\T$.  There is a unimodular constant $\eta$ such that $\eta p$ is self-inversive.  Multiplication by $\eta$ does not change any of the quantities appearing in Theorem \ref{thm:main} or in \eqref{eq:sharp-intro}.
\end{lem}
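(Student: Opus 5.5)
The plan is to write $p$ in factored form and compute $p^*$ explicitly. Write
\[
p(z)=a\prod_{j=1}^n (z-\tau_j),\qquad a\neq 0,\ |\tau_j|=1 .
\]
Using $\overline{1/\bar z-\tau_j}=(1-\overline{\tau_j}z)/z$ and $1-\overline{\tau_j}z=-\overline{\tau_j}(z-\tau_j)$ (because $|\tau_j|=1$), we get
\[
p^*(z)=\bar a\prod_{j=1}^n(1-\overline{\tau_j}z)=\bar a\,(-1)^n\Bigl(\prod_j\overline{\tau_j}\Bigr)\prod_j(z-\tau_j).
\]
Hence $p^*=\lambda p$ with
\[
\lambda=(-1)^n\frac{\bar a}{a}\prod_j\overline{\tau_j},\qquad |\lambda|=1 .
\]

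Next we look for $\eta$ with $(\eta p)^*=\eta p$. Since $(\eta p)^*=\bar\eta p^*=\bar\eta\lambda p$, the requirement is $\bar\eta\lambda=\eta$, that is, $\eta^2=\lambda$ for unimodular $\eta$. It suffices to take $\eta$ to be either square root of $\lambda$, and it is automatically unimodular.

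Finally we check invariance. Every quantity in Theorem \ref{thm:main} and in \eqref{eq:sharp-intro} depends on $p$ only through $|p|$ on $\T$, or through the ratio $p/(p-\frac1nDp)$.
\begin{itemize}
\item $N(\eta p)=N(p)$ and the entropy integral is unchanged, since $|\eta p|=|p|$ on $\T$.
\item $D$ is linear, so $\eta p-\frac1n D(\eta p)=\eta\bigl(p-\frac1nDp\bigr)$, and the quotient in \eqref{eq:sharp-intro} is unchanged.
\item The equality class $c(\omega+z^n)$ is preserved, since $c$ is simply replaced by $\eta c$.
\item The zero set and the degree of $p$ are unchanged.
\end{itemize}
The same remains true with the interpretation of Definition \ref{def:ratio-functional}, since that definition is expressed through the logarithmic integrals of $|p|$ and $|q|$, both of which are unchanged.

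The only delicate point is the sign bookkeeping in computing $\lambda$, which is routine. No real obstacle is expected.
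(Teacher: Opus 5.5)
Your proof is correct and follows essentially the same route as the paper. You write $p^*=\lambda p$ with $|\lambda|=1$, take $\eta^2=\lambda$, and then use $|\eta p|=|p|$ and $\eta p-\frac1nD(\eta p)=\eta\bigl(p-\frac1nDp\bigr)$ for invariance. The only difference is that you compute $\lambda=(-1)^n\frac{\bar a}{a}\prod_j\overline{\tau_j}$ explicitly, whereas the paper merely asserts its existence.
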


\begin{proof}
Write
\[
p(z)=a\prod_{\nu=1}^n(z-\tau_\nu),\qquad |\tau_\nu|=1.
\]
Then $p^*=\lambda p$ for some unimodular $\lambda$.  Choose $\eta$ with $\eta^2=\lambda$.  Since $(\eta p)^*=\bar\eta p^*=\bar\eta\lambda p=\eta p$, the first assertion follows.  The remaining assertion is immediate because $|\eta p|=|p|$ and
\[
\eta p-\frac1nD(\eta p)=\eta\left(p-\frac1nDp\right).
\]
\end{proof}

From now on, until the approximation step in Section \ref{s:5}, we assume that $p=p^*$ and that $p$ has simple zeros on $\T$.  Write
\begin{equation}\label{eq:p-coeffs}
p(z)=\sum_{j=0}^n a_jz^j,
\qquad a_j=\overline{a_{n-j}}.
\end{equation}
Let $D=z\frac{d}{dz}$ and define
\begin{equation}\label{eq:q-def}
q=p-\frac1nDp.
\end{equation}
Then
\begin{equation}\label{eq:q-coeffs}
q(z)=\sum_{j=0}^{n-1}\frac{n-j}{n}a_jz^j.
\end{equation}
By \eqref{eq:p-coeffs}, and with the reflection still taken relative to degree $n$,
\begin{equation}\label{eq:q-star-Dp}
q^*(z)=\frac1nDp(z)=\sum_{j=1}^{n}\frac{j}{n}a_jz^j.
\end{equation}
Consequently,
\begin{equation}\label{eq:p-qqstar}
p=q+q^*.
\end{equation}

\begin{lem}\label{lem:q-zero-free}
Let $p$ have all zeros on $\T$ and let $q$ be given by \eqref{eq:q-def}.  Then $q$ has no zeros in $\D$.  If, in addition, the zeros of $p$ are simple and $p=p^*$, then $q$ has no zeros on $\overline{\D}$ and
$r=q^*/q$
is a finite Blaschke product satisfying $r(0)=0$.
\end{lem}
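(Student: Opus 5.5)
The zero-freeness of $q$ in $\D$ should be a polar-derivative statement, so I would reduce it to the Gauss--Lucas theorem. For $z\neq0$ we have
\[
q(z)=p(z)-\tfrac1n zp'(z)=\tfrac1n\bigl(np(z)-zp'(z)\bigr),
\]
which is $\frac1n$ times the polar derivative $D_\alpha p(z)=np(z)+(\alpha-z)p'(z)$ evaluated at the pole $\alpha=0$. Laguerre's theorem says: if all zeros of $p$ lie in a closed circular region $C$ and $D_\alpha p(z_0)=0$, then $z_0$ or $\alpha$ lies in $C$. To avoid quoting it, I would argue directly as follows.

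Suppose $z_0\in\D$ and $q(z_0)=0$. Since $q(0)=a_0=p(0)\neq0$ (the zeros of $p$ lie on $\T$), we have $z_0\neq0$. If $p(z_0)=0$ then $|z_0|=1$, which is impossible. So $p(z_0)\ne0$, and we may divide by $p(z_0)$ to get
\[
\frac{z_0p'(z_0)}{p(z_0)}=n,\qquad\text{that is,}\qquad \frac1n\sum_{\nu=1}^n\frac{z_0}{z_0-\tau_\nu}=1 .
\]
Now use the fact that the Möbius map $w(\tau)=z_0/(z_0-\tau)$ sends $\T$ to a circle. For $|z_0|<1$ it sends $\overline{\D}^c\cup\{\infty\}$, a region that contains the pole $\tau=z_0$ on neither side, in a controlled way. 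Concretely, $\operatorname{Re}\frac{z_0}{z_0-\tau}<\frac12$ is equivalent to $|z_0|^2<\ldots$; I would check the exact half-plane inequality
\[
\operatorname{Re}\frac{z_0}{z_0-\tau}<\frac12 \iff |z_0|^2-\operatorname{Re}(\bar z_0\tau)\cdot 2+\ldots ,
\]
and more precisely show $\operatorname{Re}\frac{z_0}{z_0-\tau}=\frac{|z_0|^2-\operatorname{Re}(z_0\bar\tau)}{|z_0-\tau|^2}$. Since $|z_0-\tau|^2=1+|z_0|^2-2\operatorname{Re}(z_0\bar\tau)$, we get
\[
\operatorname{Re}\frac{z_0}{z_0-\tau}<1\iff |z_0|^2-\operatorname{Re}(z_0\bar\tau)<1+|z_0|^2-2\operatorname{Re}(z_0\bar\tau)\iff \operatorname{Re}(z_0\bar\tau)<1,
\]
which holds whenever $|z_0|<1$ and $|\tau|=1$. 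Averaging over $\nu$ gives real part strictly less than $1$, a contradiction. This argument is the main step; the only care needed is the strict inequality.

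For the closed disk, assume in addition that the zeros are simple and $p=p^*$, and let $z_0\in\T$. If $q(z_0)=0$ and $p(z_0)=0$, then $z_0p'(z_0)=n\bigl(p(z_0)-q(z_0)\bigr)=0$. This forces $p'(z_0)=0$, i.e.\ a double zero, which simplicity rules out. If $p(z_0)\ne0$, the same computation applies with $|z_0|=1$, and now $\operatorname{Re}(z_0\bar\tau_\nu)<1$ unless $\tau_\nu=z_0$. But $\tau_\nu=z_0$ would mean $p(z_0)=0$, so strictness persists and we again reach a contradiction. (Note that $p(z_0)=p^*(z_0)$ is not even needed here.)

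Finally, $q$ is zero-free on $\overline{\D}$ and has degree at most $n-1$, with $q(z)=\sum_{j<n}\frac{n-j}{n}a_jz^j$. Its reflection relative to degree $n$ is $q^*(z)=z^n\overline{q(1/\bar z)}$, so $|q^*|=|q|$ on $\T$. Hence $r=q^*/q$ is holomorphic on a neighbourhood of $\overline{\D}$, rational, and unimodular on $\T$, and is therefore a finite Blaschke product. The identity $r(0)=0$ follows from $q^*(0)=\overline{\hat q(n)}=0$, since $\deg q\le n-1$ (equivalently, from \eqref{eq:q-star-Dp}, $q^*=\frac1nDp$ vanishes at $0$), while $q(0)=a_0\ne0$.
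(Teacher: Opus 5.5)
Your proof is correct. The interior part is essentially the paper's argument. The paper uses the identity $\operatorname{Re}\frac{z}{z-\tau}=\frac12-\frac{1-|z|^2}{2|z-\tau|^2}<\frac12$. You prove only the weaker per-term bound $\operatorname{Re}\frac{z_0}{z_0-\tau}<1$. That suffices, because a zero of $q$ forces the average of these terms to be exactly $1$. The unfinished fragment about the half-plane $\operatorname{Re}<\frac12$ should simply be deleted.

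The boundary step is where you genuinely differ. The paper combines the reflection symmetry $q^*(\zeta)=\zeta^n\overline{q(\zeta)}$ with $p=q+q^*$ and $q^*=\frac1nDp$, so a zero of $q$ on $\T$ becomes a common zero of $p$ and $Dp$. You instead split according to whether $p(z_0)=0$ or not:
\begin{itemize}
\item If $p(z_0)=0$, then $z_0p'(z_0)=n(p(z_0)-q(z_0))=0$, which contradicts simplicity.
\item If $p(z_0)\neq0$, the real-part estimate survives on $\T$; in fact each term then has real part exactly $\frac12$.
\end{itemize}

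Your route shows that the closed-disk statement does not need $p=p^*$. That is little real gain, since the normalization costs nothing by Lemma~\ref{lem:self-inversive-normalization}. The paper's route is a one-line consequence of the decomposition $p=q+q^*$, which it needs later anyway.
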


\begin{proof}
Let
\[
p(z)=a\prod_{\nu=1}^n(z-\tau_\nu),\qquad |\tau_\nu|=1.
\]
For $|z|<1$,
\[
\frac{Dp(z)}{p(z)}=\sum_{\nu=1}^n\frac{z}{z-\tau_\nu}.
\]
For $|\tau|=1$ one has
\begin{equation*}
\operatorname{Re}\frac{z}{z-\tau}
=
\frac12-\frac{1-|z|^2}{2|z-\tau|^2}<\frac12,
\qquad |z|<1.
\end{equation*}
Therefore
\[
\operatorname{Re}\frac{Dp(z)}{p(z)}<\frac n2,
\qquad |z|<1.
\]
If $q(z)=0$ in $\D$, then $Dp(z)/p(z)=n$, a contradiction.  Hence $q$ is zero free in $\D$.

Assume now that $p=p^*$ and that all zeros of $p$ on $\T$ are simple.  If $\zeta\in\T$ and $q(\zeta)=0$, then $q^*(\zeta)=\zeta^n\overline{q(\zeta)}=0$, and hence by \eqref{eq:p-qqstar}, $p(\zeta)=q(\zeta)+q^*(\zeta)=0$.  But then \eqref{eq:q-star-Dp} gives $Dp(\zeta)=0$, contradicting the simplicity of the zero $\zeta$ of $p$.  Thus $q$ has no zeros on $\overline\D$.

Since $|q^*|=|q|$ on $\T$, the quotient $r=q^*/q$ is analytic in $\D$, has unimodular boundary values, and is rational; hence it is a finite Blaschke product.  Finally $q$ has degree at most $n-1$, so the constant coefficient of $q^*$ is $0$, while $q(0)=a_0\ne0$.  Thus $r(0)=0$.
\end{proof}

\section{The Jensen term}\label{s:3}

Assume in this section that $p=p^*$ has simple zeros on $\T$, and keep the notation $q$ and $r=q^*/q$ from the previous section.  By \eqref{eq:p-qqstar},
\begin{equation}\label{eq:p-q-r}
p=q(1+r).
\end{equation}
Since $r(0)=0$ and $|r|=1$ almost everywhere on $\T$,
\begin{equation}\label{eq:one-plus-r-norm}
\int_{\T}|1+r|^2\dd=2.
\end{equation}
Let $N=N(p)=\int_{\T}|p|^2\dd$.

\begin{prop}\label{prop:jensen-term}
With the above notation,
\begin{equation}\label{eq:jensen-term}
\int_{\T}|p|^2\log |q|^2\dd
\ge
N\log\frac N2.
\end{equation}
\end{prop}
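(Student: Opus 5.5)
We write $|p|^2=|q|^2|1+r|^2$ on $\T$ and use $|1+r|^2\,dm/2$ as a probability measure. By \eqref{eq:one-plus-r-norm}, $d\mu=\tfrac12|1+r|^2\dd$ is a probability measure on $\T$, and the two quantities in \eqref{eq:jensen-term} become
\[
\int_{\T}|p|^2\log|q|^2\dd=2\int_{\T}|q|^2\log|q|^2\,d\mu,
\qquad
N=2\int_{\T}|q|^2\,d\mu .
\]
Lemma~\ref{lem:q-zero-free} gives that $q$ has no zeros on $\overline\D$, so $|q|^2$ is continuous and strictly positive on $\T$. Hence everything is finite and no limiting conventions are needed.

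Next we apply Jensen's inequality to the convex function $\phi(x)=x\log x$ on $(0,\infty)$ with respect to $\mu$. Put $X=|q|^2$. Then
\[
\int X\log X\,d\mu\ge \Bigl(\int X\,d\mu\Bigr)\log\Bigl(\int X\,d\mu\Bigr)=\frac N2\log\frac N2 .
\]
Multiplying by $2$ gives $\int_\T|p|^2\log|q|^2\dd\ge N\log\frac N2$, which is \eqref{eq:jensen-term}.

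The only point needing care is the normalization \eqref{eq:one-plus-r-norm}. It follows from $|1+r|^2=2+2\operatorname{Re} r$ on $\T$ together with $\int_\T r\dd=r(0)=0$, using that $r$ is analytic on a neighbourhood of $\overline\D$ and is a Blaschke product. There is no real obstacle. For later use in the equality analysis, note that Jensen's inequality is strict unless $|q|$ is constant $\mu$-a.e. Since $|1+r|^2$ vanishes only at finitely many points, this means $|q|$ is constant on $\T$.
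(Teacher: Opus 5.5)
Your proof is correct and is essentially the same as the paper's: it uses the same probability measure $\tfrac12|1+r|^2\,dm$ and Jensen's inequality for the convex function $x\log x$ applied to $|q|^2$. Your justification of the normalization and your remark on the equality case are both correct, though the statement itself does not need the latter.
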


\begin{proof}
By \eqref{eq:one-plus-r-norm},
\[
d\mu=\frac12|1+r|^2\dd
\]
is a probability measure on $\T$.  From \eqref{eq:p-q-r},
\[
\int_{\T}|q|^2\,d\mu
=
\frac12\int_{\T}|q|^2|1+r|^2\dd
=
\frac N2.
\]
The function $\Phi(x)=x\log x$ is convex on $[0,\infty)$.  Therefore Jensen's inequality gives
\[
\frac12\int_{\T}|p|^2\log |q|^2\dd
=
\int_{\T}\Phi(|q|^2)\,d\mu
\ge
\Phi\left(\frac N2\right)
=
\frac N2\log\frac N2.
\]
This proves \eqref{eq:jensen-term}. 
\end{proof}

\section{The sharp polar derivative term}\label{s:4}

We now prove the sharp estimate for
\[
\int_{\T}|p|^2\log\left|\frac pq\right|^2\dd.
\]
The proof uses only elementary Hardy space notation.  We take
\[
\ip{f}{g}=\int_{\T} f\overline g\dd
\]
for the $H^2$ inner product.

\begin{lem}\label{lem:h-fourier}
For $|w|=1$, set
\[
h(w)=|1+w|^2\log |1+w|^2.
\]
Then the following Fourier series converges absolutely and uniformly on \(\T\):
\begin{equation}\label{eq:h-fourier}
h(w)
=
2+\frac32(w+\bar w)
+
\sum_{k=2}^{\infty}
\frac{2(-1)^k}{k(k^2-1)}(w^k+\bar w^k).
\end{equation}
In particular, the convergence also holds in \(L^1(\T)\).
\end{lem}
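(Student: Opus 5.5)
The plan is to compute the Fourier coefficients of $h$ directly from the power series of $\log(1+w)$, and then to deduce absolute and uniform convergence from the decay $O(k^{-3})$ of those coefficients.

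On $\T$ write $|1+w|^2=(1+w)(1+\bar w)=2+w+\bar w$. For $|w|=1$, $w\neq -1$, we also have
\[
\log|1+w|^2=2\operatorname{Re}\log(1+w)=\sum_{j\ge1}\frac{(-1)^{j+1}}{j}(w^j+\bar w^j).
\]
This series converges conditionally, and in $L^2(\T)$, since its coefficients are square summable. Because $2+w+\bar w$ is a trigonometric polynomial, the product of the two series is a legitimate identity in $L^2$, hence in $L^1$, and the Fourier coefficients of $h$ can be read off coefficientwise. I will set $c_j=(-1)^{j+1}/j$ for $j\ge1$, $c_0=0$, and $c_{-j}=c_j$; these are the Fourier coefficients of $\log|1+w|^2$. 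Multiplying by $2+w+\bar w$ gives
\[
\widehat h(k)=2c_k+c_{k-1}+c_{k+1}.
\]

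Next I compute these coefficients case by case. For $k=0$ the formula gives $c_{-1}+c_1=2$. For $k=1$ it gives $2c_1+c_0+c_2=2-\tfrac12=\tfrac32$. For $k\ge2$,
\[
(-1)^k\Bigl(-\frac2k+\frac1{k-1}+\frac1{k+1}\Bigr)=(-1)^k\frac{2}{k(k^2-1)},
\]
which uses the identity $\frac1{k-1}+\frac1{k+1}-\frac2k=\frac{2}{k(k^2-1)}$. By the symmetry $c_{-j}=c_j$ we get $\widehat h(-k)=\widehat h(k)$. This reproduces \eqref{eq:h-fourier} as an identity of Fourier coefficients.

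Finally I pass from coefficients to pointwise convergence. Since $\sum_k\frac{2}{k(k^2-1)}<\infty$, the series on the right of \eqref{eq:h-fourier} converges absolutely and uniformly on $\T$ to some continuous function $H$. The function $h$ is also continuous on $\T$: $x\log x\to0$ as $x\to0$, so $h$ extends continuously to $w=-1$ with value $0$. Two continuous functions with the same Fourier coefficients coincide everywhere, so $h=H$ on all of $\T$. Uniform convergence on $\T$ then implies $L^1(\T)$ convergence.

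The only step that needs care is justifying the coefficient computation, because the logarithm is singular at $w=-1$. I will handle it with the $L^2$ observation above. An alternative is to replace $w$ by $\rho w$ with $\rho<1$, where all series converge absolutely, and let $\rho\to1$ coefficientwise using dominated convergence: $\log|1+\rho w|$ is dominated by an integrable function. Everything else is routine algebra.
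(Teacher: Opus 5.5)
Your proposal is correct and follows essentially the same route as the paper. Both arguments read off $\widehat h(k)=2c_k+c_{k-1}+c_{k+1}$ by multiplying the series for $\log|1+w|^2$ by $2+w+\bar w$, and then conclude via absolute summability, continuity of $h$, and uniqueness of Fourier coefficients. The only difference is how the coefficient computation is justified. You use $L^2$ convergence of the logarithm series; here you should note that the $L^2$ limit agrees a.e.\ with the pointwise (Abel) sum. The paper instead uses the radial regularization $h_\rho=|1+w|^2\log|1+\rho w|^2$ with $h_\rho\to h$ in $L^1$, which is exactly the alternative you mention.
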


\begin{proof}
	For $0<\rho<1$, set
	\[
	h_\rho(w)=|1+w|^2\log |1+\rho w|^2 .
	\]
	Then
\begin{equation}\label{eq:log}
    	\log |1+\rho w|^2
	=
	\sum_{m=1}^{\infty}
	\frac{(-1)^{m+1}\rho^m}{m}(w^m+\bar w^m),
	\qquad |w|=1,
\end{equation}
	with absolute and uniform convergence.  Since as $\rho\to 1$,
	\[
	\log |1+\rho w|^2\longrightarrow \log |1+w|^2
	\qquad\hbox{in }L^1(\T),
	\]
	we have $h_\rho\to h$ in $L^1(\T)$.

    Let \(\widehat h(k)\) denote the \(k\)-th Fourier coefficient.  Since
\(h_\rho\to h\) in \(L^1(\T)\), for every fixed \(k\) we have
\[
        \widehat h(k)=\lim_{\rho\uparrow1}\widehat{h_\rho}(k).
\]
Write
\[
        \ell_m(\rho)=\frac{(-1)^{m+1}\rho^m}{m}\quad(m\ge1),
        \qquad \ell_0(\rho)=0.
\]
Multiplying the expansion in \eqref{eq:log} by
\[
        |1+w|^2=2+w+\bar w,
\]
and collecting the \(w^k\)-terms, we see that the coefficient of \(w^k\)
in \(h_\rho\) is
\[
        2\ell_k(\rho)+\ell_{k-1}(\rho)+\ell_{k+1}(\rho),
        \qquad k\ge1,
\]
and that the constant coefficient is \(2\ell_1(\rho)\).  Letting
\(\rho\uparrow1\) gives
\[
        \widehat h(0)=2,\qquad
        \widehat h(1)=\frac32,
\]
and, for \(k\ge2\),
\[
        \widehat h(k)=
        2\frac{(-1)^{k+1}}{k}
        +\frac{(-1)^k}{k-1}
        +\frac{(-1)^{k+2}}{k+1}
        =
        \frac{2(-1)^k}{k(k^2-1)}.
\]
The coefficients \(\widehat h(-k)\) are the same because \(h\) is real-valued.

	It remains only to justify the asserted mode of convergence.  Let \(H\) denote
the right-hand side of \eqref{eq:h-fourier}.  Since
\[
        \sum_{k=2}^{\infty}\frac{1}{k(k^2-1)}<\infty,
\]
the series defining \(H\) converges absolutely and uniformly on \(\T\).  Hence
\(H\) is continuous and its Fourier coefficients are exactly the coefficients
computed above.  Thus \(h-H\in L^1(\T)\) has all Fourier coefficients equal to
zero.  By the uniqueness theorem for Fourier coefficients in \(L^1(\T)\),
\(h=H\) almost everywhere.  With the convention \(h(-1)=0\), the function \(h\)
is continuous on \(\T\).  Therefore \(h=H\) everywhere on \(\T\), and the
Fourier series in \eqref{eq:h-fourier} converges absolutely and uniformly to
\(h\).
\end{proof}

For an analytic function $g(z)=\sum_{j\ge0}g_jz^j$, define the finite weighted quadratic form
\begin{equation*}
S_n(g)=\sum_{j=1}^{n-1}\frac{n-j}{j}|g_j|^2.
\end{equation*}
We establish the following weighted Schur contraction.
\begin{lem}\label{lem:weighted-schur}
Let $\varphi$ be a Schur function on $\D$, that is, $|\varphi|\le1$ in $\D$.  If $f\in H^2$ and $f(0)=0$, then
\begin{equation*}
S_n(\varphi f)\le S_n(f).
\end{equation*}
\end{lem}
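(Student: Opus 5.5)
The plan is to write $S_n$ as a nonnegative combination of truncated $H^2$ norms and to check that each truncated norm is contracted by multiplication by $\varphi$. For $1\le m\le n-1$, let $P_m$ denote the orthogonal projection of $H^2$ onto the polynomials of degree at most $m$. Put $w_j=\frac{n-j}{j}$ for $1\le j\le n-1$ and $w_n=0$, and define
\[
c_m=w_m-w_{m+1}=\frac{n-m}{m}-\frac{n-m-1}{m+1}=\frac{n}{m(m+1)}>0,\qquad 1\le m\le n-1 .
\]
The endpoint agrees with the general formula, since $c_{n-1}=w_{n-1}=\frac1{n-1}=\frac{n}{(n-1)n}$. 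Summing by parts gives $w_j=\sum_{m=j}^{n-1}c_m$, and hence, for any $g\in H^2$ with $g(0)=0$,
\[
S_n(g)=\sum_{j=1}^{n-1}w_j|g_j|^2=\sum_{m=1}^{n-1}c_m\sum_{j=1}^{m}|g_j|^2=\sum_{m=1}^{n-1}c_m\norm{P_m g}^2 .
\]
Here $g(0)=0$ is used so that the $j=0$ term inside $\norm{P_mg}^2$ vanishes.

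Both $f$ and $\varphi f$ vanish at the origin, so the identity applies to each of them. It therefore suffices to prove, for each fixed $m$,
\[
\norm{P_m(\varphi f)}\le\norm{P_m f}.
\]
The key observation is causality of multiplication: the Taylor coefficient $(\varphi f)_j=\sum_{i\le j}\widehat\varphi(j-i)f_i$ involves only $f_0,\dots,f_j$. Consequently $P_m(\varphi f)=P_m(\varphi P_mf)$, which gives
\[
\norm{P_m(\varphi f)}=\norm{P_m(\varphi P_m f)}\le\norm{\varphi P_m f}\le\norm{P_m f}.
\]
The last inequality holds because $|\varphi|\le1$ almost everywhere on $\T$, so multiplication by $\varphi$ is a contraction on $H^2$.

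Multiplying these inequalities by $c_m>0$ and summing over $m$ yields $S_n(\varphi f)\le S_n(f)$. No step is a serious obstacle. The only point requiring care is the telescoping computation, specifically checking that every $c_m$ is nonnegative, that is, that the weights $w_j$ are nonincreasing in $j$ with $w_n=0$. This is exactly what permits the reduction to the truncation inequalities $\norm{P_m(\varphi f)}\le\norm{P_mf}$.
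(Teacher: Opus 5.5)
Your proof is correct. Its outer structure matches the paper's: both write
$S_n(g)=\sum_{m=1}^{n-1}(w_m-w_{m+1})\sum_{j=1}^m|g_j|^2$ with nonincreasing weights $w_j=\frac{n-j}{j}$, $w_n=0$. Both thereby reduce the lemma to the truncated inequality $\sum_{j\le m}|(\varphi f)_j|^2\le\sum_{j\le m}|f_j|^2$.

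The difference is in how that truncated inequality is proved. The paper starts from positivity of the Schur kernel $\frac{1-\varphi_\rho(z)\overline{\varphi_\rho(w)}}{1-z\bar w}$ for the dilations $\varphi_\rho(z)=\varphi(\rho z)$. It multiplies this by the rank-one kernel $f_\rho(z)\overline{f_\rho(w)}$ using the Schur product theorem, reads off nonnegativity of the diagonal Taylor coefficients of the resulting positive kernel, and lets $\rho\uparrow1$. You instead note that multiplication by $\varphi$ is lower triangular in the monomial basis, so $P_m(\varphi f)=P_m(\varphi P_m f)$, and then use only $\norm{\varphi g}\le\norm{g}$ on $H^2$.

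Your route is shorter and more elementary. It needs no dilation and no Schur product theorem. It also avoids the fact, which the paper states without proof, that a positive kernel has a positive semidefinite coefficient matrix.

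The paper's kernel argument is phrased in the reproducing-kernel language natural to the Pick-interpolation setting of the Agler--McCarthy programme. It also yields positivity of the full coefficient matrix, although only the diagonal entries are used.
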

\begin{proof}
	For $0<\rho<1$, put
	\[
	f_\rho(z)=f(\rho z),
	\qquad
	\varphi_\rho(z)=\varphi(\rho z).
	\]
 The Schur kernel
\[
        \frac{1-\varphi_\rho(z)\overline{\varphi_\rho(w)}}{1-z\bar w}
\]
is positive on \(\D\times\D\).  Since
\(f_\rho(z)\overline{f_\rho(w)}\) is a rank-one positive kernel, every
finite Gram matrix of
\[
K_\rho(z,w)=
\frac{
        f_\rho(z)\overline{f_\rho(w)}
        -
        \varphi_\rho(z)f_\rho(z)
        \overline{\varphi_\rho(w)f_\rho(w)}
}{1-z\bar w}
\]
is the Schur product of two positive semidefinite matrices.  By the Schur
product theorem \cite[p.~479, Theorem~7.5.3]{HJ13}, \(K_\rho\) is positive. Expand
\[
        K_\rho(z,w)=\sum_{i,j\ge0}c_{ij}^{(\rho)}z^i\bar w^j .
\]
Since \(K_\rho\) is positive, the coefficient matrix
\((c_{ij}^{(\rho)})_{i,j\ge0}\) is positive semidefinite; in particular,
\[
        c_{\ell\ell}^{(\rho)}\ge0,\qquad \ell\ge0.
\]
	
	For $g(z)=\sum_{j\ge0}g_jz^j$, the coefficient of $z^\ell\bar w^\ell$ in
	\[
	\frac{g(z)\overline{g(w)}}{1-z\bar w}
	\]
	is
	\[
	A_\ell(g)=\sum_{j=0}^{\ell}|g_j|^2 .
	\]
	Since the coefficient of $z^j$ in $f_\rho$ is $\rho^j f_j$, and the coefficient
	of $z^j$ in $\varphi_\rho f_\rho$ is $\rho^j(\varphi f)_j$, the inequality
	$c_{\ell\ell}^{(\rho)}\ge0$ gives
	\[
	\sum_{j=0}^{\ell}\rho^{2j}|(\varphi f)_j|^2
	\le
	\sum_{j=0}^{\ell}\rho^{2j}|f_j|^2 .
	\]
	Letting $\rho\uparrow1$, we obtain
	\begin{equation}\label{eq:partial-sums-contract}
		A_\ell(\varphi f)\le A_\ell(f),
		\qquad \ell\ge0.
	\end{equation}
	
	Set
	\[
	w_j=\frac{n-j}{j}\quad (1\le j\le n-1),
	\qquad
	w_n=0.
	\]
	Then $w_j-w_{j+1}>0$.  Since $f(0)=0$, also $(\varphi f)(0)=0$, and therefore
	\[
	S_n(g)=\sum_{\ell=1}^{n-1}(w_\ell-w_{\ell+1})A_\ell(g)
	\]
	for $g=f$ and $g=\varphi f$.  Combining this identity with
	\eqref{eq:partial-sums-contract} proves
	\[
	S_n(\varphi f)\le S_n(f).
	\]
\end{proof}

Next, we establish the following sharp polar derivative estimate.
\begin{thm}\label{thm:sharp-simple}
Let $n\ge2$.  Let $p=p^*$ be a degree $n$ polynomial with simple zeros on $\T$. Put
$
q=p-\frac1nDp.
$
Then
\begin{equation}\label{eq:sharp-strong}
\int_{\T}|p|^2\log\left|\frac pq\right|^2\dd
-
\int_{\T}|p|^2\dd
\ge
\frac{2\Gamma(p)}{n(n-1)},
\end{equation}
where
\begin{equation*}
\Gamma(p)=
\sum_{j=1}^{n-1}\frac{j(n-j)}{n^2}|a_j|^2
\end{equation*}
and $p(z)=\sum_{j=0}^na_jz^j$.  In particular,
\begin{equation}\label{eq:sharp-simple}
\int_{\T}|p|^2\log\left|\frac{p}{p-\frac1nDp}\right|^2\dd
\ge
\int_{\T}|p|^2\dd.
\end{equation}
\end{thm}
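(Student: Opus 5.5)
The plan is to reduce the left side of \eqref{eq:sharp-strong} to a Fourier series in the Blaschke product $r=q^*/q$ of Lemma~\ref{lem:q-zero-free}, and then bound each term by Lemma~\ref{lem:weighted-schur} together with a weighted Cauchy--Schwarz inequality.

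\textbf{Step 1: rewrite the integrand through $h$.} By \eqref{eq:p-q-r}, on $\T$ we have $p/q=1+r$. Hence
\[
|p|^2\log\left|\frac pq\right|^2=|q|^2\,|1+r|^2\log|1+r|^2=|q|^2\,h(r),
\]
where $h$ is the function of Lemma~\ref{lem:h-fourier}. Since $|r|=1$ on $\T$ and $q$ is continuous, I substitute $w=r(e^{it})$ into \eqref{eq:h-fourier}; the uniform convergence justifies integrating term by term against $|q|^2$. Note that $\int_{\T}|q|^2r^k\dd=\ip{qr^k}{q}$, and because $|q|^2=|q^*|^2$ on $\T$, both $\int|q|^2 r^k\dd$ and its conjugate are well defined. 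Writing $N=\int|q|^2|1+r|^2\dd=2\norm{q}^2+2\operatorname{Re}\ip{q^*}{q}$, I get
\[
\int_{\T}|p|^2\log\left|\frac pq\right|^2\dd-N=\operatorname{Re}\ip{q^*}{q}+\sum_{k\ge2}\frac{4(-1)^k}{k(k^2-1)}\operatorname{Re}\ip{r^{k-1}q^*}{q}.
\]
Here I used $qr^k=q^*r^{k-1}$. By \eqref{eq:q-coeffs} and \eqref{eq:q-star-Dp}, the $j$-th coefficients are $q^*_j=\frac jn a_j$ and $q_j=\frac{n-j}{n}a_j$. So $\ip{q^*}{q}=\sum_{j=1}^{n-1}\frac{j(n-j)}{n^2}|a_j|^2=\Gamma(p)$ exactly.

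\textbf{Step 2: bound each higher term.} Fix $k\ge2$ and put $g=r^{k-1}q^*$. Since $q$ has degree at most $n-1$ and $g(0)=0$, we have $\ip{g}{q}=\sum_{j=1}^{n-1}g_j\overline{q_j}$. The weighted Cauchy--Schwarz inequality with weights $\frac{n-j}{j}$ and $\frac{j}{n-j}$ then gives
\[
|\ip{g}{q}|^2\le S_n(g)\sum_{j=1}^{n-1}\frac{j}{n-j}|q_j|^2 .
\]
The second factor equals $\Gamma(p)$ by a direct computation. Lemma~\ref{lem:weighted-schur}, applied with $\varphi=r^{k-1}$ and $f=q^*$ (which vanishes at $0$), gives $S_n(g)\le S_n(q^*)=\sum\frac{n-j}{j}\frac{j^2}{n^2}|a_j|^2=\Gamma(p)$. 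Hence $|\ip{r^{k-1}q^*}{q}|\le\Gamma(p)$.

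The refinement comes from the vanishing orders at $0$. Since $r(0)=0$ and $q^*(0)=0$, the function $g$ vanishes to order at least $k$ at the origin. Therefore $g_j=0$ for $j<k$, and $\ip{g}{q}=0$ whenever $k\ge n$. So only $2\le k\le n-1$ contribute.

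\textbf{Step 3: sum the bounds.} Combining Steps~1 and~2,
\[
\text{LHS}-N\ \ge\ \Gamma(p)\left(1-4\sum_{k=2}^{n-1}\frac{1}{k(k^2-1)}\right).
\]
The partial sums telescope: $\sum_{k=2}^{m}\frac1{k(k^2-1)}=\frac14-\frac1{2m(m+1)}$. Taking $m=n-1$, the bracket equals $\frac{2}{n(n-1)}$, which is \eqref{eq:sharp-strong}. Inequality \eqref{eq:sharp-simple} follows because $\Gamma(p)\ge0$.

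\textbf{Main obstacle.} The main obstacle is Step~2: one must find the factorization $qr^k=r^{k-1}q^*$ together with the weighted pairing under which both Cauchy--Schwarz factors equal $\Gamma(p)$. This is what makes the contraction of Lemma~\ref{lem:weighted-schur} usable. The remaining points (termwise integration and the exact value of the $k=1$ term) are routine given Lemma~\ref{lem:h-fourier}.
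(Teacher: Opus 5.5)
Your proposal is correct and follows essentially the same route as the paper. You substitute $r$ into the Fourier series of $h$ from Lemma~\ref{lem:h-fourier}, identify the $k=1$ moment as $\Gamma(p)$, write the higher moments as $\ip{r^{k-1}q^*}{q}$ and bound them by $\Gamma(p)$ through Lemma~\ref{lem:weighted-schur} and the same weighted Cauchy--Schwarz pairing, discard $k\ge n$ by the order of vanishing at the origin, and finish with the telescoping sum.
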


\begin{proof} By Lemma~\ref{lem:q-zero-free}, \(q\) has no zeros on \(\overline{\D}\),
and hence
$ r=q^*/q
$
is a finite Blaschke product with \(r(0)=0\).
 For $k\ge0$, define
\begin{equation*}
M_k=\int_{\T}|q|^2r^k\dd.
\end{equation*}
Since $r$ is inner, $M_0=\norm{q}_2^2$ and
\[
M_k=\ip{r^kq}{q}.
\]
By Parseval's identity, this inner product only involves the Taylor
coefficients of \(r^kq\) of degrees \(0,\ldots,n-1\), because \(q\) has
degree at most \(n-1\).  Since \(r(0)=0\), the function \(r^kq\) has a zero
of order at least \(k\) at the origin.  Hence
\begin{equation}\label{eq:Mk-vanish}
        M_k=0,\qquad k\ge n.
\end{equation}
By \eqref{eq:p-q-r},
\[
\int_{\T}|p|^2\log\left|\frac pq\right|^2\dd
=
\int_{\T}|q|^2 h(r)\dd.
\]
For \(L\ge2\), let
\[
s_L(w)
=
2+\frac32(w+\bar w)
+
\sum_{k=2}^{L}
\frac{2(-1)^k}{k(k^2-1)}(w^k+\bar w^k)
\]
be the \(L\)-th symmetric partial sum of the Fourier series in
Lemma~\ref{lem:h-fourier}.  By Lemma~\ref{lem:h-fourier}, \(s_L\to h\)
uniformly on \(\T\).  Since \(r\) has unimodular boundary values, this gives
\[
        \|s_L\circ r-h\circ r\|_{L^\infty(\T)}
        \le
        \|s_L-h\|_{L^\infty(\T)}
        \longrightarrow 0.
\]
As \(q\) is a polynomial, \(|q|^2\) is bounded on \(\T\).  Therefore
\[
        \int_{\T}|q|^2s_L(r)\dd
        \longrightarrow
        \int_{\T}|q|^2h(r)\dd .
\]
For each fixed \(L\), using the definition
\[
        M_k=\int_{\T}|q|^2r^k\dd,
\]
we have
\[
\int_{\T}|q|^2s_L(r)\dd
=
2M_0
+
3\operatorname{Re}M_1
+
4\sum_{k=2}^{L}
\frac{(-1)^k}{k(k^2-1)}\operatorname{Re}M_k .
\]
By \eqref{eq:Mk-vanish}, \(M_k=0\) for \(k\ge n\).  Passing to the limit
\(L\to\infty\), we obtain
\begin{equation}\label{eq:J-Mk}
\int_{\T}|p|^2\log\left|\frac pq\right|^2\dd
=
2M_0+3\operatorname{Re}M_1
+4\sum_{k=2}^{n-1}\frac{(-1)^k}{k(k^2-1)}\operatorname{Re}M_k.
\end{equation}

On the other hand,
\begin{equation}\label{eq:N-M0-M1}
\int_{\T}|p|^2\dd
=
\int_{\T}|q|^2|1+r|^2\dd
=
2M_0+2\operatorname{Re}M_1.
\end{equation}
We now estimate the moments $M_k$.

Let
\[
f_k=r^kq,
\qquad k\ge1.
\]
Then $f_1=q^*$, and, by \eqref{eq:q-coeffs} and \eqref{eq:q-star-Dp},
\[
(q)_j=\frac{n-j}{n}a_j,
\qquad
(f_1)_j=(q^*)_j=\frac{j}{n}a_j.
\]
Therefore
\begin{equation}\label{eq:M1-coeff}
M_1=\ip{f_1}{q}
=
\sum_{j=1}^{n-1}\frac{j(n-j)}{n^2}|a_j|^2
=\Gamma(p)
\ge0,
\end{equation}
and
\begin{equation*}
S_n(f_1)
=
\sum_{j=1}^{n-1}\frac{n-j}{j}\left|\frac{j}{n}a_j\right|^2
=\Gamma(p).
\end{equation*}
For $k\ge1$ we have $f_k=r^{k-1}f_1$.  Since $r^{k-1}$ is a Schur function and $f_1(0)=0$, Lemma \ref{lem:weighted-schur} yields
\begin{equation*}
S_n(f_k)\le S_n(f_1)=\Gamma(p).
\end{equation*}
By the weighted Cauchy-Schwarz inequality,
\begin{align}
|M_k|
&=|\ip{f_k}{q}|
=\left|\sum_{j=1}^{n-1}(f_k)_j\overline{q_j}\right| \notag\\
&\le
\left(\sum_{j=1}^{n-1}\frac{n-j}{j}|(f_k)_j|^2\right)^{1/2}
\left(\sum_{j=1}^{n-1}\frac{j}{n-j}|q_j|^2\right)^{1/2} \notag\\
&=S_n(f_k)^{1/2}\Gamma(p)^{1/2}
\le \Gamma(p).
\label{eq:Mk-bound}
\end{align}
Here the second equality uses the fact that $q$ has degree at most $n-1$ and $f_k$ has zero constant coefficient.

Subtracting \eqref{eq:N-M0-M1} from \eqref{eq:J-Mk} and using \eqref{eq:M1-coeff} gives
\[
\int_{\T}|p|^2\log\left|\frac pq\right|^2\dd
-
\int_{\T}|p|^2\dd
=
\Gamma(p)+4\sum_{k=2}^{n-1}\frac{(-1)^k}{k(k^2-1)}\operatorname{Re}M_k.
\]
By \eqref{eq:Mk-bound},
\begin{align*}
\int_{\T}|p|^2\log\left|\frac pq\right|^2\dd
-
\int_{\T}|p|^2\dd
&\ge
\Gamma(p)-4\Gamma(p)\sum_{k=2}^{n-1}\frac{1}{k(k^2-1)}.
\end{align*}
The elementary telescoping identity
\begin{equation*}
\sum_{k=2}^{n-1}\frac{1}{k(k^2-1)}
=
\frac14-\frac{1}{2n(n-1)}
\end{equation*}
therefore gives \eqref{eq:sharp-strong}.  The weaker inequality \eqref{eq:sharp-simple} follows immediately.
\end{proof}

\begin{rem}\label{rem:n-one}
For $n=1$, the same conclusion holds with equality.  Indeed, after self-inversive normalization, $p(z)=a_0+\bar a_0z$, $q=a_0$, and $r=q^*/q$ is a unimodular multiple of $z$.  Hence
\[
\int_{\T}|p|^2\log\left|\frac pq\right|^2\dd
=
|a_0|^2\int_{\T}h\dd
=
2|a_0|^2
=
\int_{\T}|p|^2\dd.
\]
\end{rem}

\section{Removing multiple zeros}\label{s:5}

The preceding argument was written for polynomials with simple zeros on $\T$, so that $q$ has no boundary zeros and $r=q^*/q$ is an ordinary finite Blaschke product without cancellations.  We now give the continuity and approximation details needed to let boundary zeros coalesce.

The first is a simple self-inversive approximation lemma.
\begin{lem}\label{lem:self-inversive-approx}
Let $p=p^*$ have degree $n$ and all zeros on $\T$.  Then there are degree $n$ polynomials $p_\nu=p_\nu^*$ with simple zeros on $\T$ such that $p_\nu\to p$ coefficientwise.
\end{lem}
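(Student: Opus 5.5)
The plan is to perturb the zeros of $p$ directly on the circle and then restore self-inversiveness with a unimodular constant, as in Lemma~\ref{lem:self-inversive-normalization}. Write
\[
p(z)=a\prod_{\nu=1}^n(z-\tau_\nu),\qquad |\tau_\nu|=1,\ a\neq 0 .
\]
For each $m\ge1$ choose distinct points $\tau_\nu^{(m)}\in\T$ with $|\tau_\nu^{(m)}-\tau_\nu|<1/m$. One concrete choice is $\tau_\nu^{(m)}=\tau_\nu e^{i\nu\varepsilon_m}$ with $\varepsilon_m\downarrow0$ small enough that no two of them coincide. This is possible because only finitely many values of $\varepsilon$ produce a coincidence $\tau_\nu e^{i\nu\varepsilon}=\tau_\mu e^{i\mu\varepsilon}$ with $\nu\ne\mu$ in a small range.

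Next set $\tilde p_m(z)=a\prod_\nu(z-\tau_\nu^{(m)})$. It has degree $n$ and simple zeros on $\T$, and its coefficients are polynomial functions of the zeros, so $\tilde p_m\to p$ coefficientwise. As in the proof of Lemma~\ref{lem:self-inversive-normalization}, $\tilde p_m^*=\lambda_m\tilde p_m$ with $|\lambda_m|=1$, and explicitly
\[
\lambda_m=\frac{\bar a}{a}\,(-1)^n\prod_\nu\overline{\tau_\nu^{(m)}} .
\]
Since $p=p^*$, the analogous constant for $p$ is $\lambda=1$. Thus $\lambda_m\to1$ by continuity.

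The key step is to choose $\eta_m$ with $\eta_m^2=\lambda_m$ and $\eta_m\to1$, which we do by taking the principal square root once $\lambda_m$ is near $1$. Then put $p_\nu=\eta_\nu\tilde p_\nu$. Each $p_\nu$ is self-inversive of degree $n$ with simple zeros on $\T$, and $p_\nu\to p$ coefficientwise.

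The only delicate point is this choice of square root. An arbitrary choice could converge to $-p$ instead of $p$. The explicit formula for $\lambda_m$ settles it, so I expect no real obstacle.
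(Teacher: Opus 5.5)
Your proposal is correct and follows the paper's proof essentially step by step: you perturb the zeros along $\T$ to make them distinct, note that $\tilde p_m^*=\lambda_m\tilde p_m$ with $\lambda_m\to1$, and rescale by square roots $\eta_m\to1$. The only difference is cosmetic. You get $\lambda_m\to1$ from the explicit formula $\lambda_m=(\bar a/a)(-1)^n\prod\overline{\tau_\nu^{(m)}}$, whereas the paper argues that every limit point of $\lambda_\nu$ equals $1$ because $P_\nu^*\to p^*=p\neq0$; also, in your last step you switch the sequence index from $m$ to $\nu$, which you had already used to label the zeros.
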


\begin{proof}
Write
\[
p(z)=a\prod_{j=1}^n(z-\tau_j),\qquad |\tau_j|=1.
\]
Choose numbers $\varepsilon_{j,\nu}\to0$ so that, for each fixed $\nu$, the points
\[
\tau_{j,\nu}=\tau_j e^{i\varepsilon_{j,\nu}},
\qquad j=1,\dots,n,
\]
are pairwise distinct.  Put
\[
P_\nu(z)=a\prod_{j=1}^n(z-\tau_{j,\nu}).
\]
Then $P_\nu$ has simple zeros on $\T$ and $P_\nu\to p$ coefficientwise.  Since all zeros of $P_\nu$ lie on $\T$, there is a unimodular $\lambda_\nu$ such that $P_\nu^*=\lambda_\nu P_\nu$.  Because $P_\nu^*\to p^*=p$ and $P_\nu\to p\ne0$, every convergent subsequence of $\lambda_\nu$ has limit $1$; hence $\lambda_\nu\to1$.

For all sufficiently large \(\nu\), choose a square root \(\eta_\nu\) of
\(\lambda_\nu\) such that \(\eta_\nu\to1\), and define
\[
        p_\nu=\eta_\nu P_\nu .
\]
Then
\[
        p_\nu^*
        =
        \overline{\eta_\nu}P_\nu^*
        =
        \overline{\eta_\nu}\lambda_\nu P_\nu
        =
        \eta_\nu P_\nu
        =
        p_\nu .
\]
Moreover, \(p_\nu\) has simple zeros on \(\T\), has degree \(n\), and
\(p_\nu\to p\) coefficientwise.  Omitting finitely many initial indices
completes the proof.
\end{proof}
We also establish the following logarithmic continuity.
\begin{lem}\label{lem:log-continuity}
Let $A_\nu$ and $B_\nu$ be sequences of nonzero polynomials of uniformly bounded degree, and suppose that $A_\nu\to A$ and $B_\nu\to B$ coefficientwise, where $A$ and $B$ are nonzero.  Then
\begin{equation*}
\int_{\T}|A_\nu|^2\log |B_\nu|^2\dd
\longrightarrow
\int_{\T}|A|^2\log |B|^2\dd.
\end{equation*}
\end{lem}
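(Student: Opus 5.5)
Since $|A_\nu|^2\to|A|^2$ uniformly on $\T$, the whole difficulty lies in the logarithm of $|B_\nu|^2$, which may blow up near zeros of $B_\nu$ on or near $\T$. The plan is to split
\[
\int_{\T}|A_\nu|^2\log|B_\nu|^2\dd-\int_{\T}|A|^2\log|B|^2\dd
=\int_{\T}\bigl(|A_\nu|^2-|A|^2\bigr)\log|B_\nu|^2\dd+\int_{\T}|A|^2\bigl(\log|B_\nu|^2-\log|B|^2\bigr)\dd .
\]
The first term is at most $\sup_{\T}\bigl||A_\nu|^2-|A|^2\bigr|\cdot\|\log|B_\nu|^2\|_{L^1(\T)}$. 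The second term is at most $\sup_{\T}|A|^2\cdot\|\log|B_\nu|^2-\log|B|^2\|_{L^1(\T)}$. It therefore suffices to prove
\[
\log|B_\nu|\to\log|B| \quad\text{in } L^1(\T),
\]
since this also yields the uniform $L^1$ bound needed for the first term.

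For the $L^1$ convergence, factor the polynomials. Let $d$ be a common bound on the degrees. Write $B_\nu(z)=c_\nu\prod_{j}(z-\beta_{j,\nu})$ and $B(z)=c\prod_j(z-\beta_j)$. The degree of $B_\nu$ may exceed $\deg B$, because leading coefficients can tend to $0$. Handle this with the reciprocal form: a root $\beta$ of large modulus contributes the factor $(z-\beta)$, which we rewrite as $-\beta(1-z/\beta)$, and we absorb the constant $-\beta$ into $c_\nu$. By Hurwitz/continuity of roots, after relabeling, the roots of $B_\nu$ split into two groups. The first group consists of $\deg B$ roots converging to the roots $\beta_j$ of $B$. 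The second group consists of roots escaping to $\infty$, whose normalized factors $1-z/\beta_{j,\nu}$ tend to $1$ uniformly on $\T$. The normalized leading constants then converge to $c\neq0$; one can check this, for instance, by evaluating at a point of $\T$ away from all limit roots. Thus
\[
\log|B_\nu|=\log|c'_\nu|+\sum_j\log|z-\beta_{j,\nu}|+\sum_{\text{escaping}}\log|1-z/\beta_{j,\nu}|,
\]
and it suffices to treat each factor separately.

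The main step is to show that $\log|z-\beta_\nu|\to\log|z-\beta|$ in $L^1(\T)$ whenever $\beta_\nu\to\beta\in\C$. This holds because the family $\{\log|e^{it}-b|: |b|\le R\}$ is uniformly integrable on $\T$. Indeed, the negative part is dominated near the closest point by $\bigl|\log|\sin((t-t_0)/2)|\bigr|$, up to constants. Concretely, $|e^{it}-b|\ge|e^{it}-b/|b||\cdot c$ fails when $b$ is near $0$, but there is no singularity in that case. More simply, one uses $\int_E|\log|e^{it}-b||\,dt\le\int_{E}\bigl|\log|\sin((t-\arg b)/2)|\bigr|\,dt+C|E|$ for $|b|$ near $1$, which is uniformly small when $|E|$ is small. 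Pointwise a.e. convergence combined with uniform integrability then gives $L^1$ convergence by Vitali's theorem. The escaping factors converge uniformly to $0$ after taking the logarithm. I expect the careful root-tracking, including the degree drop, to be the main technical point; the uniform integrability estimate is routine.
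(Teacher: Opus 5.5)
Your proposal is correct and follows essentially the same route as the paper. Both use the same two-term splitting, which reduces everything to showing $\log|B_\nu|\to\log|B|$ in $L^1(\T)$, then factor into linear factors and use uniform integrability of $\log|e^{it}-b|$ for $b$ near the circle. The only difference is how the degree drop is handled: the paper uses homogeneous factors $u_{j,\nu}z-v_{j,\nu}$ with $|u_{j,\nu}|^2+|v_{j,\nu}|^2=1$ together with a compactness/subsequence argument, whereas you separate bounded roots (via Hurwitz) from escaping roots normalized as $1-z/\beta_{j,\nu}$, and you invoke Vitali where the paper argues directly with a small arc and uniform convergence off it.
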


\begin{proof}
	Let $d$ be a common upper bound for the degrees of $A_\nu,B_\nu,A,B$.  We first
	prove that
	\begin{equation}\label{eq:logB-L1}
		\log |B_\nu|\longrightarrow \log |B|
		\qquad\hbox{in }L^1(\T).
	\end{equation}
	
	We use the following elementary fact.  If
	\[
	(u_\nu,v_\nu)\to(u,v),
	\qquad |u_\nu|^2+|v_\nu|^2=|u|^2+|v|^2=1,
	\]
	then
	\begin{equation}\label{eq:linear-factor-L1}
		\log |u_\nu e^{it}-v_\nu|
		\longrightarrow
		\log |ue^{it}-v|
		\qquad\hbox{in }L^1([0,2\pi]).
	\end{equation}
	Indeed, if $u=0$, then $|v|=1$ and
	$u_\nu e^{it}-v_\nu\to -v$ uniformly, so the convergence is uniform.  If
	$u\ne0$, put
	\[
	a_\nu=\frac{v_\nu}{u_\nu},
	\qquad
	a=\frac vu .
	\]
	Then, for all large $\nu$,
	\[
	\log |u_\nu e^{it}-v_\nu|
	=
	\log |u_\nu|+\log |e^{it}-a_\nu|.
	\]
	If $|a|\ne1$, the convergence is again uniform.  It remains to consider the
	case $|a|=1$.  After a rotation, assume $a=1$.  For $a'$ sufficiently close to
	$1$, write
	\[
	a'=re^{i\phi},
	\qquad
	\frac12\le r\le2.
	\]
	For $|s|\le\pi$,
	\[
	|e^{is}-r|^2=(1-r)^2+2r(1-\cos s)\ge c s^2
	\]
	with an absolute constant $c>0$.  Hence, with
	$\log^- x=\max\{-\log x,0\}$,
	\[
	\log^- |e^{it}-a'|
	\le
	C+\log^- |t-\phi|
	\]
	for a constant $C$ independent of $a'$ near $1$.  Also
	$\log^+ |e^{it}-a'|$ is uniformly bounded.  Therefore, for every arc $I$ of
	length $|I|\le1$,
	\begin{equation}\label{eq:uniform-log-integrability}
		\sup_{a'\ \mathrm{near}\ 1}
		\int_I |\log |e^{it}-a'||\,dt
		\le
		C |I|\log\frac{e}{|I|}.
	\end{equation}
Given \(\varepsilon>0\), choose an arc \(I\subset[-\pi,\pi]\) centered at
\(0\) so small that the right-hand side of
\eqref{eq:uniform-log-integrability} is less than \(\varepsilon\).  For
all sufficiently large \(\nu\), the argument of \(a_\nu\) lies in the arc
with the same center and half the length of \(I\).  On
\([-\pi,\pi]\setminus I\), the functions
\(\log |e^{it}-a_\nu|\) converge uniformly to \(\log |e^{it}-1|\).
This proves \eqref{eq:linear-factor-L1}.
	
	We now prove \eqref{eq:logB-L1}.  It is enough to show that every subsequence of
	$\log |B_\nu|$ has a further subsequence converging to $\log |B|$ in
	$L^1(\T)$.  Choose an arbitrary subsequence, still denoted by $B_\nu$.  For each
	$\nu$, write the homogeneous factorization
	\[
	B_\nu(z)
	=
	C_\nu\prod_{j=1}^{d}(u_{j,\nu}z-v_{j,\nu}),
	\qquad
	|u_{j,\nu}|^2+|v_{j,\nu}|^2=1.
	\]
	Factors with $u_{j,\nu}=0$ represent roots at infinity.  Since the normalized
	products
	\[
	\prod_{j=1}^{d}(u_jz-v_j)
	\]
	form a compact family of nonzero polynomials, their coefficient vectors are
	bounded away from zero.  As the coefficient vectors of $B_\nu$ converge, the
	constants $C_\nu$ are bounded.  Passing to a further subsequence, we may assume
	that
	\[
	C_\nu\to C,
	\qquad
	(u_{j,\nu},v_{j,\nu})\to(u_j,v_j),
	\quad j=1,\dots,d.
	\]
	The coefficientwise limit is
	\[
	C\prod_{j=1}^{d}(u_jz-v_j).
	\]
	Since $B_\nu\to B\not\equiv0$, this limit equals $B$, and necessarily
	$C\ne0$.  By \eqref{eq:linear-factor-L1},
	\[
	\log |u_{j,\nu}e^{it}-v_{j,\nu}|
	\longrightarrow
	\log |u_je^{it}-v_j|
	\qquad\hbox{in }L^1
	\]
	for each $j$.  Hence
	\[
	\log |B_\nu|
	=
	\log |C_\nu|
	+
	\sum_{j=1}^{d}\log |u_{j,\nu}e^{it}-v_{j,\nu}|
	\longrightarrow
	\log |B|
	\qquad\hbox{in }L^1(\T)
	\]
	along this further subsequence.  Since every subsequence has a further
	subsequence with the same $L^1$ limit, \eqref{eq:logB-L1} follows for the whole
	sequence.
	
	Therefore
	\[
	\log |B_\nu|^2\longrightarrow \log |B|^2
	\qquad\hbox{in }L^1(\T).
	\]
	In particular, after discarding finitely many initial terms,
	\[
	\sup_\nu \norm{\log |B_\nu|^2}_{L^1(\T)}<\infty.
	\]
	Since $A_\nu\to A$ coefficientwise and the degrees are uniformly bounded,
	\[
	|A_\nu|^2\longrightarrow |A|^2
	\qquad\hbox{uniformly on }\T.
	\]
	Thus
	\begin{align*}
		&\left|
		\int_{\T}|A_\nu|^2\log |B_\nu|^2\dd
		-
		\int_{\T}|A|^2\log |B|^2\dd
		\right| \\
		&\le
		\norm{|A_\nu|^2-|A|^2}_{L^\infty(\T)}
		\norm{\log |B_\nu|^2}_{L^1(\T)} \\
		&\quad+
		\norm{|A|^2}_{L^\infty(\T)}
		\norm{\log |B_\nu|^2-\log |B|^2}_{L^1(\T)}.
	\end{align*}
	The right-hand side tends to $0$, proving
	\[
	\int_{\T}|A_\nu|^2\log |B_\nu|^2\dd
	\longrightarrow
	\int_{\T}|A|^2\log |B|^2\dd .
	\]
\end{proof}
We next define the ratio functional.
\begin{defin}\label{def:ratio-functional}
	Let $P$ be a degree $n$ polynomial with all zeros on $\T$, and set
	\[
	Q=P-\frac1nDP.
	\]
	Since $Q(0)=P(0)\ne0$, $Q$ is not the zero polynomial.  We define
	\begin{equation}\label{eq:ratio-functional-def}
		\mathcal J_n(P)=
		\int_{\T}|P|^2\log|P|^2\dd
		-
		\int_{\T}|P|^2\log|Q|^2\dd.
	\end{equation}
	When the pointwise quotient is unambiguous, this is equal to
	\[
	\int_{\T}|P|^2\log\left|\frac{P}{Q}\right|^2\dd.
	\]
	If $P$ and $Q$ have common boundary zeros, the pointwise quotient expression may
	be ambiguous at those points.  In that case, throughout the paper the notation
	\[
	\int_{\T}|P|^2\log\left|\frac{P}{Q}\right|^2\dd
	\]
	means the quantity $\mathcal J_n(P)$ defined in
	\eqref{eq:ratio-functional-def}.  The two logarithmic integrals in
	\eqref{eq:ratio-functional-def} are finite by Lemma
	\ref{lem:log-continuity}, applied to fixed nonzero polynomials.
\end{defin}
Next, we estimate the Jensen term with multiple zeros.
\begin{cor}\label{cor:jensen-general}
Let $p=p^*$ have degree $n$ and all zeros on $\T$, not necessarily simple.  Put
\[
q=p-\frac1nDp,
\qquad N(p)=\int_{\T}|p|^2\dd.
\]
Then
\begin{equation}\label{eq:jensen-general}
\int_{\T}|p|^2\log|q|^2\dd
\ge
N(p)\log\frac{N(p)}2.
\end{equation}
\end{cor}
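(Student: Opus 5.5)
The plan is to deduce Corollary~\ref{cor:jensen-general} from Proposition~\ref{prop:jensen-term} by approximation. By Lemma~\ref{lem:self-inversive-approx}, we choose degree $n$ polynomials $p_\nu=p_\nu^*$ with simple zeros on $\T$ such that $p_\nu\to p$ coefficientwise. We then set
\[
q_\nu=p_\nu-\frac1nDp_\nu .
\]
The map $f\mapsto f-\frac1nDf$ is linear on polynomials of degree at most $n$, acting on coefficients by $a_j\mapsto \frac{n-j}{n}a_j$. Hence $q_\nu\to q$ coefficientwise.

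Each $p_\nu$ satisfies the hypotheses of Section~\ref{s:3}. Proposition~\ref{prop:jensen-term} therefore gives
\[
\int_{\T}|p_\nu|^2\log|q_\nu|^2\dd\ge N(p_\nu)\log\frac{N(p_\nu)}2 .
\]
We then pass to the limit on both sides.

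For the right-hand side, $N(p_\nu)=\sum_j|(p_\nu)_j|^2\to N(p)>0$ by Parseval. Since $x\mapsto x\log(x/2)$ is continuous on $(0,\infty)$, the right-hand side converges to $N(p)\log\frac{N(p)}2$.

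For the left-hand side, we apply Lemma~\ref{lem:log-continuity} with $A_\nu=p_\nu$, $B_\nu=q_\nu$, $A=p$ and $B=q$. All degrees are bounded by $n$. The lemma requires $A$, $B$ and all $A_\nu$, $B_\nu$ to be nonzero, and this holds:
\begin{itemize}
\item $p$ and $p_\nu$ have degree $n$, so they are nonzero.
\item $q(0)=p(0)$, and $p(0)\ne0$ because all zeros of $p$ lie on $\T$. Hence $q\ne0$, and likewise $q_\nu\ne0$.
\end{itemize}
The lemma then gives
\[
\int_{\T}|p_\nu|^2\log|q_\nu|^2\dd\to\int_{\T}|p|^2\log|q|^2\dd ,
\]
and the inequality passes to the limit. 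This limit is finite even when $q$ has boundary zeros in common with $p$, which is exactly the situation that arises for multiple zeros.

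The only step needing care is the convergence of the logarithmic integral, since $q$ may vanish on $\T$ at multiple zeros of $p$. Lemma~\ref{lem:log-continuity} already handles this, so I expect no real obstacle beyond verifying its nonvanishing hypotheses as above.
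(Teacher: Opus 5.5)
Your proposal is correct and is essentially the paper's own proof: approximate by the simple-zero self-inversive polynomials $p_\nu$ of Lemma~\ref{lem:self-inversive-approx}, apply Proposition~\ref{prop:jensen-term} to each $p_\nu$, and pass to the limit using Lemma~\ref{lem:log-continuity} together with $N(p_\nu)\to N(p)$. You also check the nonvanishing hypotheses of Lemma~\ref{lem:log-continuity} more explicitly than the paper does.
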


\begin{proof}
Choose $p_\nu=p_\nu^*$ as in Lemma \ref{lem:self-inversive-approx}, and set
\[
q_\nu=p_\nu-\frac1nDp_\nu.
\]
For each $\nu$, Proposition \ref{prop:jensen-term} gives
\[
\int_{\T}|p_\nu|^2\log|q_\nu|^2\dd
\ge
N(p_\nu)\log\frac{N(p_\nu)}2.
\]
Since $q_\nu\to q$ coefficientwise and $q(0)=p(0)\ne0$, Lemma \ref{lem:log-continuity} gives convergence of the left side, while $N(p_\nu)\to N(p)$.  Passing to the limit proves \eqref{eq:jensen-general}.
\end{proof}
We also need the sharp polar derivative estimate with multiple zeros.
\begin{cor}\label{cor:sharp-general}
Let $p=p^*$ have degree $n\ge2$ and all zeros on $\T$, not necessarily simple.  Let
\[
q=p-\frac1nDp,
\qquad p(z)=\sum_{j=0}^na_jz^j.
\]
Then, with the quotient expression understood as in Definition \ref{def:ratio-functional},
\begin{equation}\label{eq:sharp-general-strong}
\int_{\T}|p|^2\log\left|\frac pq\right|^2\dd
-
\int_{\T}|p|^2\dd
\ge
\frac{2}{n(n-1)}
\sum_{j=1}^{n-1}\frac{j(n-j)}{n^2}|a_j|^2.
\end{equation}
\end{cor}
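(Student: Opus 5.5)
The plan is to deduce Corollary~\ref{cor:sharp-general} from Theorem~\ref{thm:sharp-simple} by approximation, in the same way as Corollary~\ref{cor:jensen-general} was obtained from Proposition~\ref{prop:jensen-term}. By Lemma~\ref{lem:self-inversive-approx}, choose degree $n$ polynomials $p_\nu=p_\nu^*$ with simple zeros on $\T$ such that $p_\nu\to p$ coefficientwise. Put $q_\nu=p_\nu-\frac1nDp_\nu$, and write $p_\nu(z)=\sum_j a_j^{(\nu)}z^j$.

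For each $\nu$, Theorem~\ref{thm:sharp-simple} applies, since $n\ge2$, $p_\nu=p_\nu^*$, and the zeros are simple. It gives
\[
\int_{\T}|p_\nu|^2\log|p_\nu|^2\dd-\int_{\T}|p_\nu|^2\log|q_\nu|^2\dd-\int_{\T}|p_\nu|^2\dd
\ge
\frac{2}{n(n-1)}\sum_{j=1}^{n-1}\frac{j(n-j)}{n^2}|a_j^{(\nu)}|^2 .
\]
Here I use that for simple zeros $q_\nu$ has no zeros on $\overline{\D}$ (Lemma~\ref{lem:q-zero-free}). Hence the pointwise quotient integral splits as the difference of the two logarithmic integrals. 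This agrees with $\mathcal J_n(p_\nu)$ in Definition~\ref{def:ratio-functional}.

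Next, let $\nu\to\infty$ term by term. For the first integral, apply Lemma~\ref{lem:log-continuity} with $A_\nu=B_\nu=p_\nu$ and $A=B=p$, all nonzero of degree $n$. For the second, apply it with $A_\nu=p_\nu$ and $B_\nu=q_\nu$. Coefficientwise convergence $q_\nu\to q$ follows because the map $p\mapsto p-\frac1nDp$ is linear on coefficients. Moreover $q$ is nonzero, since $q(0)=p(0)\neq0$; indeed $p$ has no zero at $0$ because all its zeros lie on $\T$. The terms $N(p_\nu)\to N(p)$ and the right-hand side $\Gamma(p_\nu)\to\Gamma(p)$ converge because they are continuous in finitely many coefficients.

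Passing to the limit therefore yields $\mathcal J_n(p)-N(p)\ge \frac{2\Gamma(p)}{n(n-1)}$, which is \eqref{eq:sharp-general-strong} under the convention of Definition~\ref{def:ratio-functional}. The only delicate point is the convergence of logarithmic integrals when zeros coalesce on $\T$ and $q$ acquires boundary zeros. Lemma~\ref{lem:log-continuity} is exactly designed for this, and working with the difference form $\mathcal J_n$ rather than the quotient avoids any ambiguity. So I expect no real obstacle beyond checking the hypotheses of that lemma: nonzero limits and uniformly bounded degrees, at most $n$.
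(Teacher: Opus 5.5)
Your proposal is correct and matches the paper's proof. You approximate $p$ by self-inversive polynomials with simple zeros via Lemma~\ref{lem:self-inversive-approx}, apply Theorem~\ref{thm:sharp-simple} to each $p_\nu$, and pass to the limit in both logarithmic integrals defining $\mathcal J_n$ using Lemma~\ref{lem:log-continuity}, together with continuity of $N$ and of the coefficient expression. Your additional checks fill in hypotheses the paper's short argument leaves implicit: that for simple zeros the pointwise quotient integral equals $\mathcal J_n(p_\nu)$, and that $q(0)=p(0)\neq0$.
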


\begin{proof}
Choose $p_\nu=p_\nu^*$ as in Lemma \ref{lem:self-inversive-approx}.  Theorem \ref{thm:sharp-simple} applies to $p_\nu$.  By Lemma \ref{lem:log-continuity}, both terms in the definition of $\mathcal J_n(p_\nu)$ converge to the corresponding terms for $p$, and $\int|p_\nu|^2\dd\to\int|p|^2\dd$.  The coefficient expression on the right side of \eqref{eq:sharp-general-strong} is continuous.  Passing to the limit proves the result.
\end{proof}

\section{Proof of Theorem \ref{thm:main}}\label{s:6}

We now prove Theorem \ref{thm:main}. 
\begin{proof}[Proof of Theorem \ref{thm:main}]
By Lemma \ref{lem:self-inversive-normalization}, we may multiply $p$ by a unimodular constant and assume $p=p^*$; this changes neither $|p|$, nor $\left|p-\frac1nDp\right|$, nor any coefficient modulus appearing below.  Put
\[
q=p-\frac1nDp,
\qquad
N=N(p).
\]

First assume $n=1$.  Then every degree-one polynomial with its zero on $\T$ has the form $c(\omega+z)$ with $|\omega|=1$, and the equality computation at the end of the proof gives the asserted statement.  Hence assume $n\ge2$.

By Corollary \ref{cor:jensen-general},
\[
\int_{\T}|p|^2\log |q|^2\dd
\ge
N\log\frac N2.
\]
By Corollary \ref{cor:sharp-general},
\[
\int_{\T}|p|^2\log\left|\frac pq\right|^2\dd
\ge
N+
\frac{2}{n(n-1)}
\sum_{j=1}^{n-1}\frac{j(n-j)}{n^2}|a_j|^2.
\]
The quotient integral is understood in the sense of
Definition~\ref{def:ratio-functional}; hence adding the two estimates gives the strengthened inequality
\begin{equation}\label{eq:strong-main}
\int_{\T}|p|^2\log |p|^2\dd
\ge
N\left(1+\log\frac N2\right)
+
\frac{2}{n(n-1)}
\sum_{j=1}^{n-1}\frac{j(n-j)}{n^2}|a_j|^2.
\end{equation}
In particular, \eqref{eq:main-homogeneous} follows.

It remains to identify equality.  Suppose equality holds in \eqref{eq:main-homogeneous}.  Apply the preceding normalization if necessary.  Since multiplication by a unimodular constant multiplies every coefficient by that same constant, it does not change which coefficients vanish.  The non-negative remainder term in \eqref{eq:strong-main} must be zero.  Thus
\[
a_1=a_2=\cdots=a_{n-1}=0.
\]
Hence
\[
p(z)=a_0+a_nz^n.
\]
Since all zeros of $p$ lie on $\T$, one must have $|a_0|=|a_n|$.  Therefore $p$ has the form
\[
p(z)=c(\omega+z^n),
\qquad |\omega|=1,
\]
with $c\ne0$.

Conversely, if $p(z)=c(\omega+z^n)$ with $|\omega|=1$, then
\[
N(p)=2|c|^2.
\]
The unimodular constant $\omega$ is removed by a rotation of the variable.  Thus, writing $z=e^{i\theta}$ and changing variables,
\begin{align*}
\int_{\T}|p|^2\log |p|^2\dd
&=
|c|^2\int_{\T}|1+z^n|^2
\log\bigl(|c|^2|1+z^n|^2\bigr)\dd\\
&=
N(p)\log |c|^2
+
|c|^2\int_{\T}|1+z|^2\log |1+z|^2\dd.
\end{align*}
By Lemma \ref{lem:h-fourier}, the last integral is the constant Fourier coefficient of $h$, namely $2$.  Thus
\[
\int_{\T}|p|^2\log |p|^2\dd
=
2|c|^2(1+\log |c|^2)
=
N(p)\left(1+\log\frac{N(p)}2\right),
\]
which proves equality for precisely the stated family.  The theorem follows.
\end{proof}
%



\begin{thebibliography}{99}






\bibitem{AM21}
J.~Agler and J.~E. McCarthy, \emph{The Krzy\.z conjecture and an entropy conjecture},
J. Anal. Math. \textbf{144} (2021), 207--226. \doi{10.1007/s11854-021-0178-z}.




\bibitem{BGI23}
O.~F. Brevig, S.~Grepstad, and S.~M. F. Instanes, \emph{F. Wiener's trick and an
extremal problem for $H^p$}, Comput. Methods Funct. Theory \textbf{23} (2023),
697--722. \doi{10.1007/s40315-022-00469-x}.



\bibitem{Bro87}
J.~E. Brown, \emph{Iteration of functions subordinate to schlicht functions},
Complex Var. Theory Appl. \textbf{9} (1987), no.~2--3, 143--152.
\doi{10.1080/17476938708814258}.

\bibitem{Erm90}
R.~J.~P.~M. Ermers, \emph{Coefficient estimates for bounded non-vanishing
functions}, Ph.D. thesis, Katholieke Universiteit Nijmegen, Wibro
Dissertatiedrukkerij, Helmond, 1990.

\bibitem{HJ13}
R.~A. Horn and C.~R. Johnson, \emph{Matrix Analysis}, 2nd ed.,
Cambridge University Press, Cambridge, 2013.

\bibitem{Hor78}
Ch.~Horowitz, \emph{Coefficients of nonvanishing functions in $H^\infty$},
Israel J. Math. \textbf{30} (1978), no.~3, 285--291. \doi{10.1007/BF02761076}.

\bibitem{HSZ77}
J.~A. Hummel, S.~Scheinberg, and L.~Zalcman, \emph{A coefficient problem for
bounded nonvanishing functions}, J. Anal. Math. \textbf{31} (1977), 169--190.
\doi{10.1007/BF02813302}.


\bibitem{Krz68}
J.~G. Krzy\.z, \emph{Coefficient problem for bounded non-vanishing functions},
Ann. Polon. Math. \textbf{20} (1968), 314.







\bibitem{MSUV15}
M.~J. Mart\'in, E.~T. Sawyer, I.~Uriarte-Tuero, and D.~Vukoti\'c,
\emph{The Krzy\.z conjecture revisited}, Adv. Math. \textbf{273} (2015),
716--745. \doi{10.1016/j.aim.2014.12.031}.




\bibitem{Sam03}
N.~Samaris, \emph{A proof of Krzy\.z's conjecture for the fifth coefficient},
Complex Var. Theory Appl. \textbf{48} (2003), no.~9, 753--766.
\doi{10.1080/0278107031000152616}.



\bibitem{Tan83}
D.~L. Tan, \emph{Coefficient estimates for bounded nonvanishing functions},
Chinese Ann. Math. Ser. A \textbf{4} (1983), no.~1, 97--104; English summary in
Chinese Ann. Math. Ser. B \textbf{4} (1983), no.~1, 131--132.


\end{thebibliography}
\end{document}